\newtheorem*{thm}{Theorem}
\newtheorem*{lemma}{Lemma}
\newcommand{\diam}{\operatorname{diam}}
\begin{document}

\title[]{Many antipodes implies many neighbors}

\author[]{Stefan Steinerberger}
\address{Department of Mathematics, University of Washington, Seattle}
\email{steinerb@uw.edu}

\begin{abstract} Suppose $\left\{x_1, \dots, x_n\right\} \subset \mathbb{R}^2$ is a set of $n$ points in the plane with diameter $\leq 1$, meaning $\|x_i - x_j\| \leq 1$ for all $1 \leq i,j \leq n$. We show that if there are many `antipodes', these are pairs of points of with distance $\geq 1-\varepsilon$, then there are many neighbors, these are pairs of points that are distance $\leq \varepsilon$. More precisely, we prove that for some universal $c>0$,
$$ \# \left\{(i,j): \|x_i - x_j\| \leq \varepsilon\right\} \geq 
\frac{c \cdot \varepsilon^{3/4}}{\left( \log \varepsilon^{-1} \right)^{1/4}}\cdot \# \left\{(i,j): \|x_i - x_j\| \geq 1- \varepsilon\right\}.$$
The inequality is very easy too prove with factor $\varepsilon^2$ and easy with $\varepsilon$. The optimal rate might be $\varepsilon^{1/2}$ which is attained by several examples.
\end{abstract}

 \maketitle
 
\vspace{-0pt}
\section{Introduction and Statement}
\subsection{Introduction} Suppose $\left\{x_1, \dots, x_n\right\} \subset \mathbb{R}^2$ is a set of $n$ points in the plane. A set can have `many' antipodes, these are pairs of points $(x_i, x_j)$ such that $\|x_i - x_j\|$ is close to the diameter of the set (the largest distance between any pair of points). We assume without loss of generality that the diameter is 1 and that `antipodes' are defined by having distance $\|x_i - x_j\| \geq 1-\varepsilon$. It is easy to construct examples with many antipodes. However, one thing that these constructions seem to have in common is that they require that many pairs of points to be rather close to each other. Points are neighbors if $\|x_i -x_j\| \leq \varepsilon$.

\begin{center}
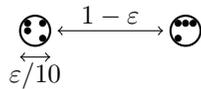
\begin{figure}[h!]
\begin{tikzpicture}
\draw [thick] (0,0) circle (0.2cm);
\draw [thick] (2,0) circle (0.2cm);
%\draw [thick] (2*0.5,2*0.86) circle (0.2cm);
\filldraw (-0.1, 0.1) circle (0.04cm);
\filldraw (0.1, 0.1) circle (0.04cm);
\filldraw (0.1, -0.1) circle (0.04cm);
\filldraw (-0.1, 0) circle (0.04cm);
\filldraw (2-0.1, 0.1) circle (0.04cm);
\filldraw (2.0, 0.1) circle (0.04cm);
\filldraw (2.1, 0.1) circle (0.04cm);
\filldraw (1.9, -0.1) circle (0.04cm);
%\filldraw [thick](2*0.5 +0.1,2*0.86) circle (0.04cm);
%\filldraw [thick](2*0.5 -0.1,2*0.86) circle (0.04cm);
%\filldraw[thick](2*0.5 +0.1,2*0.86+0.1) circle(0.04cm);
%\filldraw [thick](2*0.5 +0.1,2*0.86-0.1) circle (0.04cm);
\draw [<->] (-0.2, -0.35) -- (0.2, -0.35);
\node at (0, -0.6) {$\varepsilon/10$};
\draw [<->] (0.3, 0) -- (1.7, 0);
\node at (1, 0.2) {$1-\varepsilon$};
\end{tikzpicture}
\caption{$n$ points with $\sim n^2$ pairs of points at distance $\geq 1 - \varepsilon$ but also $\sim n^2$ pairs of points at distance $\leq \varepsilon$.}
\end{figure}
\end{center}
\vspace{-15pt}
\subsection{Result.} If there are many antipodes, then there are many neighbors.

\begin{thm} There exists a universal constant $c>0$ such that for all $\varepsilon >0$ and any set $\left\{x_1, \dots, x_n \right\} \subset \mathbb{R}^2$ with diameter $\leq 1$, i.e. satisfying $\|x_i -x_j\| \leq 1$,
   $$       \#\left\{ 1 \leq i,j \leq n: \|x_i - x_j\| \leq \varepsilon  \right\} \geq \frac{c \cdot \varepsilon^{3/4}}{\left( \log \varepsilon^{-1} \right)^{1/4}} \#\left\{ 1 \leq i,j \leq n: \|x_i - x_j\| \geq 1-\varepsilon \right\}.$$ 
\end{thm}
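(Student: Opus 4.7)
My plan is to combine an angular pigeonhole over the antipodal pairs with a Cauchy--Schwarz close-pair count inside two narrow strips, followed by a dyadic interpolation over angular scales. For each antipodal pair $(x_i,x_j)$ record the direction $\theta_{ij}\in[0,\pi)$ of the segment $x_ix_j$, and partition $[0,\pi)$ into angular bins of width $\delta$ (to be chosen). By pigeonhole some direction $\theta$ carries at least $M\delta/\pi$ antipodal pairs. Projecting onto the unit vector $u_\theta$, each such pair has projected length $\geq (1-\varepsilon)\cos\delta \geq 1-\varepsilon-O(\delta^{2})$; the projection lies in an interval of length $\leq 1$ (by the diameter bound), so both endpoints fall into a left cluster $L$ and a right cluster $R$ at the extremes of the projected interval, each corresponding in the plane to a strip of width $\varepsilon+O(\delta^{2})$ and length $\leq 1$, with $|L|\cdot|R|\geq M\delta/\pi$. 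Moreover, the diameter bound combined with the large parallel projection forces the transverse displacement within each such antipodal pair to be $\leq O(\sqrt{\varepsilon+\delta^{2}})$, which will matter below.

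I then count close pairs inside each strip. With $\delta \leq \sqrt{\varepsilon}$ each strip has width $O(\varepsilon)$ and admits a partition into $O(1/\varepsilon)$ cells of diameter $\leq \varepsilon$; Cauchy--Schwarz gives at least $c\varepsilon|L|^{2}$ close pairs inside $L$ and $c\varepsilon|R|^{2}$ inside $R$. Combined with $|L|\cdot|R|\geq M\delta/\pi$ and AM--GM this yields
\[
N \;\geq\; c\varepsilon\bigl(|L|^{2}+|R|^{2}\bigr) \;\geq\; 2c\varepsilon|L|\cdot|R| \;\gtrsim\; \varepsilon\delta\,M,
\]
and the natural single-scale choice $\delta = \sqrt{\varepsilon}$ produces only the preliminary bound $N \gtrsim M\varepsilon^{3/2}$, still short of the stated $\varepsilon^{3/4}$.

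To reach $\varepsilon^{3/4}$ I would pair the strip count with a second, ``transverse matching'' estimate that exploits the information set aside above: every antipodal pair in the chosen bin matches a point of $L$ to a point of $R$ whose perpendicular coordinate differs by $\leq O(\sqrt{\varepsilon+\delta^{2}})$, giving a bipartite alignment on $L\times R$ that the inside-strip Cauchy--Schwarz does not see. Running both estimates at every dyadic angular scale $\delta_{k}=2^{-k}$ from $O(1)$ down to $\sqrt{\varepsilon}$, summing over the $O(\log\varepsilon^{-1})$ scales, and interpolating a geometric mean between the quadratic and linear estimates in $|L|\cdot|R|$ is the route through which I expect the fourth-root factor $(\log\varepsilon^{-1})^{1/4}$ to appear. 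The main obstacle --- and the step I am least confident about --- is making this multi-scale interpolation sharp: avoiding double-counting of the same close pair at distinct scales while keeping the full strength of the transverse matching at every individual scale. A cleaner identity (for instance a Crofton-type averaging over directions of the 1D ``close-dominates-far'' inequality, which is trivial on a line) might well bypass the logarithm entirely and bring the exponent closer to the conjectured optimum $\varepsilon^{1/2}$.
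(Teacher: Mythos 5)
There is a genuine gap: the only part of your argument that is actually executed --- angular pigeonhole, two extreme strips $L,R$ of width $O(\varepsilon)$, and Cauchy--Schwarz over $O(1/\varepsilon)$ cells in each strip --- terminates at $N\gtrsim \varepsilon\delta M$ with $\delta=\sqrt{\varepsilon}$, i.e.\ $N\gtrsim \varepsilon^{3/2}M$. That is weaker even than the ``easy'' factor $\varepsilon$, which one gets with no pigeonhole at all by noting that every point involved in an antipodal pair lies within $\varepsilon$ of the boundary of the convex hull, a region covered by $\sim 1/\varepsilon$ boxes of side $\varepsilon/4$, and then applying Cauchy--Schwarz once globally. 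Everything that would carry you from there to $\varepsilon^{3/4}$ --- the ``transverse matching'' estimate, the dyadic sum over angular scales, the interpolation between a quadratic and a linear bound in $|L|\cdot|R|$ --- is left as a plan rather than an argument: the matching estimate is never formulated as an inequality, the double-counting of close pairs across scales (which you correctly identify as the obstacle) is not controlled, and the exponent $1/4$ on the logarithm is guessed from the target rather than derived. As written, the proposal proves a bound strictly weaker than the trivial one and does not establish the theorem.

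For comparison, the paper's route is quite different and the source of the gain is worth internalizing. After the same two reductions (small $\varepsilon$; points within $\varepsilon$ of $\partial\,\mathrm{conv}(X)$, covered by $k\sim 1/\varepsilon$ boxes), it encodes box-level antipodality in a $0$--$1$ matrix $M$ and bounds the ratio of antipodes to neighbors by $\lambda_1(M)\leq \sqrt{\mathrm{tr}(M^TM)}$. The trace is the number of antipodal box pairs, and the key geometric input is that two boxes at mutual distance $d\gtrsim\sqrt{\varepsilon}$ can only be simultaneously antipodal to boxes lying in the intersection of two annuli of width $\varepsilon$ whose centers are $d$ apart --- a lens covered by $\lesssim 1/d$ boxes. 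Feeding this ``few common neighbors'' property into a graph-theoretic lemma (Cauchy--Schwarz on degrees plus a layer-cake sum over the common-neighborhood sizes) gives $\mathrm{tr}(M^TM)\lesssim \sqrt{\log k}\,k^{3/2}$, hence the $\varepsilon^{3/4}(\log\varepsilon^{-1})^{-1/4}$ factor. The annulus-intersection bound is the quantitative fact your ``transverse matching'' gestures at; if you want to salvage your approach, that is the estimate you would need to make precise and then combine with a genuine second-moment argument rather than a single-direction pigeonhole.
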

 A natural candidate for the optimal scaling is $\sim \sqrt{\varepsilon}$ and we present examples attaining that rate in \S 1.3. The problem is also interesting in higher dimensions and more general metric spaces and we comment on this in \S 1.4.

\subsection{Examples.}
We discuss three examples attaining a rate of $\sim \sqrt{\varepsilon}$. Now and henceforth, we do not track constants, $A \lesssim B$ means $A \leq cB$ for some absolute constant $c>0$ that is independent of everything. $A \gtrsim B$ is used in the same manner. $A \sim B$ means that both $A \gtrsim B$ and $A \lesssim B$ are valid at the same time.\\

\textit{1. Circle.} Pick a circle of radius $1/2$ and distribute $n$ points evenly along the circle. When $n \gg 1$ is sufficiently large (depending on $\varepsilon$), then each point has $\sim \sqrt{\varepsilon}\cdot n$ antipodes and $\sim \varepsilon \cdot n$ neighbors. Summing over all points implies the scaling relation $\#\mbox{neighbors} \sim \sqrt{\varepsilon} \cdot \#\mbox{antipodes}$.\\
\textit{2. Releaux Triangle.}
  Suppose $0 < \varepsilon \ll 1$, take the Reuleaux triangle and place $\sim \sqrt{\varepsilon} \cdot n$ points in each of the three vertices and distribute the remaining $\sim n$ points evenly across the three arcs. Each point in a vertex has $\sim n/3$ antipodes, each point on an arc is antipodal to $\sim \sqrt{\varepsilon} \cdot n$ points in the opposing vertex, there are $\sim \sqrt{\varepsilon}\cdot n^2$ antipodal pairs.  Regarding neighbors, we first note that the three clusters create $\sim 3 \varepsilon n^2$ neighbors. As for the remaining $\sim n$ points, they are spread evenly over three arcs of length $\sim 1$. Partitioning these into $\sim \varepsilon^{-1}$ boxes of size $\varepsilon \times \varepsilon$, we see that this creates a total of $\sim (\varepsilon n)^2 \varepsilon^{-1} = \varepsilon n^2$ neighbors for a total of $\sim \varepsilon n^2$ neighbors. Therefore $\#\mbox{neighbors} \sim \sqrt{\varepsilon} \cdot \#\mbox{antipodes}$.
  
\begin{figure}[h!]
    \begin{minipage}{.58\textwidth}
    \begin{tikzpicture}
         \tikz{\draw[thick] (90:1) arc (120:180:1.5) arc (-120:-60:1.5) arc (0:60:1.5);}
         \filldraw (-0.03,0.28) circle (0.05cm);
         \filldraw (-1.5,0.28) circle (0.05cm);
         \filldraw (-0.75,1.55) circle (0.05cm);
         \node at (0.3, 0.2) {$\sqrt{\varepsilon}n$ };
         \node at (-1.9, 0.2) {$\sqrt{\varepsilon}n$};
         \node at (-0.75, 1.8) {$\sqrt{\varepsilon}n$};
         \node at (3,0) {$\quad$};
\end{tikzpicture} 
\end{minipage}
\begin{minipage}{.38\textwidth}
\begin{tikzpicture}[scale=0.4]
    \def\n{10} % Number of sides
    \def\r{3}  % Radius of the circumcircle
    % Draw edges of the 11-gon
    \foreach \i in {0,...,9} {
        \draw[very thick] 
            ({\r*cos(360*\i/\n)}, {\r*sin(360*\i/\n)}) -- 
            ({\r*cos(360*(\i+1)/\n)}, {\r*sin(360*(\i+1)/\n)});
    }
    \filldraw (0,0) circle (0.04cm);
\draw [thick] (0,0) -- (0.9, 2.85);
\node at (1.1, 1.5) {$R$};
\draw [thick] (0,0) -- (0, 2.85);
\node at (-0.5, 1.6) {$a$};
\node at (0, 3.2) {$s$};
\node at (0,-5) {};
\end{tikzpicture}
\end{minipage}
\vspace{-10pt}
\caption{Left: a Reuleaux triangle with many points in the vertices, right: a regular polygon with points evenly spaced.}
\end{figure}
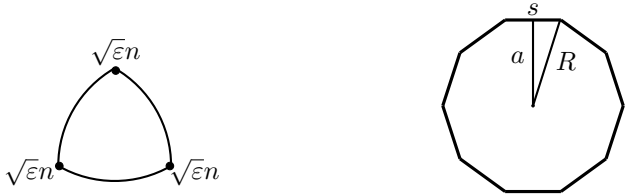

\textit{3. Regular polygon.} 
Maybe not all that different from the circle: take a regular $k-$gon with $k$ even. We want the polygon to have diameter 1, this forces $R=1/2$ (see Fig. 2). 
Then we have the relationships
$$ a = \frac{1}{2} \cos\left( \frac{\pi}{k} \right) = \frac{1}{2} - \frac{\pi^2}{4 k^2} + o(k^{-3})\qquad \mbox{and} \qquad s = \sin\left(\frac{\pi}{n}\right) = \frac{\pi}{k} + \mathcal{O}(k^{-2}).$$
In particular, when $2a \geq 1 - \varepsilon$, then each point on each side is an antipode with each point on the opposite side. This suggests setting $k$ so that  $\pi^2/(2 k^2) \sim \varepsilon$ and thus $k \sim \varepsilon^{-1/2}$. We spread $n \gg 1$ points evenly (with respect to arclength) on the boundary of the polygon. Then each point has at least $\sim n/k \sim \sqrt{\varepsilon}n$ antipodes, the points on the opposing side, leading to a total of $\sqrt{\varepsilon} n^2$ antipodes. Each point has $\sim \varepsilon n$ neighbors leading to a total of $\sim \varepsilon n^2$ neighbors. One might be tempted to observe that the first two examples are convex sets of constant width and the third is very nearly such a set; maybe all convex sets with constant width could be turned into such an example.

\subsection{Higher dimensions and metric spaces.} It would be interesting to understand what happens in higher dimensions. Our argument would also produce an exponent in $\mathbb{R}^d$, however, since the argument does not give the sharp rate in two dimensions, we only present the details in $\mathbb{R}^2$. Maybe the examples from \S 1.3 suggest a natural scaling in higher dimensions: using the Releaux triangle as motivation, consider placing points $\delta n$ in $\mathbf{0} \in \mathbb{R}^d$ and the distribute the remaining $n$ points as equispaced as possible (maximizing the minimal distance between any pair of points) over a spherical cap on the unit sphere $\mathbb{S}^{d-1} \subset \mathbb{R}^d$ whose radius is small enough so that the diameter does not exceed 1. Then, for $n$ sufficiently large depending only on $\varepsilon$ and the dimension, we have
$$  \# \left\{(i,j): \|x_i - x_j\| \leq \varepsilon\right\} \sim \delta^2 n^2 + \varepsilon^{d-1}n^2 $$
  while the number of antipodes is $\sim \delta n^2$. This motivates the choice $\delta \sim \varepsilon^{(d-1)/2}$ and leads to an example where $\#\mbox{neighbors} \sim \varepsilon^{(d-1)/2} \cdot \#\mbox{antipodes}$.
Alternatively, one may consider a sphere of radius $1/2$ and place $n$ points as evenly as possible over the sphere. When $n$ is very large (depending on $\varepsilon$), each point has antipodes in a spherical cap of diameter $\sim \sqrt{\varepsilon}$ centered in the opposite from the point (the classical use of the term `antipode'): such a cap has area $\sim \varepsilon^{(d-1)/2}$ and captures $\sim n\varepsilon^{(d-1)/2}$ points. For the same reason, each point has $\sim \varepsilon^{d-1} n$ neighbors. Once more we have $\#\mbox{neighbors} \sim \varepsilon^{(d-1)/2} \cdot \#\mbox{antipodes}$.
 It might be interesting to understand how the exponent relates to other properties of the underlying metric space.  For example, a star graph (see Fig. 3) suggests a metric space in which no such property holds:  consider a metric space on $\mathbb{N} = \left\{0,1,2,\dots\right\}$ such that
$d(0,n) = 1$ for all $n \geq 1$ while $d(i,j) = 2$ for all $i > j \geq 1$. There are arbitrarily large sets of points such that all pairs of points are maximally separated but no points are particularly close to each other. Seeing as this example is quite `hyperbolic', loosely speaking, one could wonder whether some type of generalized notion of curvature might come into play. 

\begin{center}
\begin{figure}[h!]
\begin{tikzpicture}
\node at (0,0) {\includegraphics[width=0.17\textwidth]{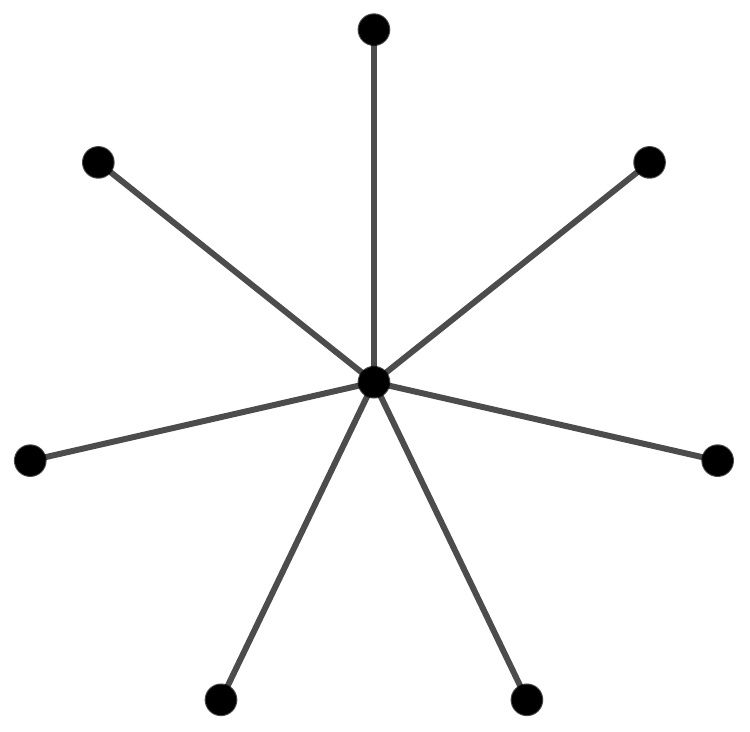}};
\end{tikzpicture}
\caption{A sketch of a type of metric space where no such property holds: there can be many antipodes, pairs of points at distance $\geq 2$ while any pair of points is at least $1-$separated.}
\end{figure}
\end{center}
\vspace{-10pt}

\subsection{Related results} We are not aware of any results in this direction. 
There are a number of results concerned with the problem of maximizing various notions of average distance between the points subject to a diameter constraint.
Witsenhausen \cite{wit} studied the problem of maximizing $\sum_{i,j} \|x_i-x_j\|^2$ subject to the points having diameter 1. Larcher-Pillichshammer \cite{larcher}, Pillichshammer \cite{fritz, fritz3} and Wolf \cite{wolf} considered $\sum_{i,j} \|x_i-x_j\|$. Pillichshammer \cite{fritz2} studied $\sum_{i,j} \|x_i-x_j\|^{\gamma}$ where $\gamma \geq 1.07$. For an unexpected connection of music theory, see 
Toussaint \cite{touss}. These results are at least partially motivated by earlier results of Fejes-T\'oth \cite{toth1, toth}. Our question seems vaguely related but the local-global interplay is somewhat different.

\section{Proof}

\subsection{Outline of the argument.}
The argument has the following steps.
\begin{enumerate}
    \item \S 2.2. It is enough to prove the statement for $\varepsilon < 1/1000$. This is an easy consequence of the fact that the number of neighbors is always $\gtrsim \varepsilon^2 n^2$ while the number of antipodes is always trivially $\lesssim n^2$. 
    \item \S 2.3. The second reduction allows us to assume that all the points are within distance $\leq \varepsilon$ of the boundary of the convex hull of the points.  Points that are further away cannot contribute to the antipode count while potentially counting as neighbors: erasing them makes the inequality stronger.
    \item \S 2.4. Once we are restricted to a $\varepsilon-$neighborhood of the boundary, we can partition this neighborhood into $\lesssim \varepsilon^{-1}$ sets of diameter $< \varepsilon/2$. Counting points in boxes leads to a bound in terms of a quadratic form. 
    \item \S 2.5. The problem is now estimating the largest eigenvalue $\lambda_1(M)$ of a matrix $M$. We use that $M$ is symmetric, $\lambda_1(M) = \sqrt{ \lambda_1(M^T M)}$, and  
    $$ \lambda_1(M^T M) \leq \sum_{i=1}^{n} \lambda_i(M^T M) = \mbox{tr}(M^T M).$$
    This reduces the problem to estimating $\mbox{tr}(M^T M)$ which will turn out to have a geometric/combinatorial significance. This is arguably the point where the argument is lossy, bounding an eigenvalue of a positive-definite matrix by the sum of all eigenvalues is typically quite wasteful.
    \item \S 2.6 discusses a Lemma from Graph Theory. 
    \item \S 2.7 concludes the argument by proving an inequality for $\mbox{tr}(M^T M)$ that is sharp up to a logarithmic factor.
\end{enumerate}

\subsection{Reducing to $\varepsilon$ small.}
It suffices to prove the inequality for $ 0 < \varepsilon < 1/1000$ or any other arbitrarily positive number as long as it is absolute. A set of diameter $\leq 1$ can be trivially placed in a disk of radius $2$. This is related to Lebesgue's universal covering problem for which much more precise results are known \cite{baez, hansen}. A disk of radius 2, in turn, can be partitioned into $ k \lesssim 1/\varepsilon^2$ boxes of diameter $< \varepsilon/2$ which we denote by $B_1, B_2, \dots, B_k$. Introducing $n_i = \# \left\{ 1 \leq i \leq n: x_i \in B_i \right\}$
to denote the number of points in $B_i$, we have with the Cauchy-Schwarz inequality 
$$  n = \sum_{i=1}^{k} n_i \leq \sqrt{k} \cdot \left( \sum_{i=1}^{k} n_i^2 \right)^{1/2}$$
from which we deduce, using $k \lesssim 1/\varepsilon^2$ and that the diameter of $B_i$ is $< \varepsilon /2$ (ensuring that all points in the same box are neighbors) that
$$ \varepsilon^2 n^2 \lesssim \frac{n^2}{k}  \lesssim \sum_{i=1}^{k} n_i^2 \lesssim    \#\left\{ 1 \leq i,j \leq n: \|x_i - x_j\| \leq \varepsilon  \right\}.$$
Since the number of antipodes is, trivially, $\lesssim n^2$, this implies that the inequality is true for any fixed $\varepsilon > 0$ with constant $ \sim \varepsilon^2$ and that we may assume $\varepsilon < 1/1000$.

\subsection{Reducing to a neighborhood of the convex hull.}  Consider the convex hull $\Omega = \mbox{conv}(X)$. Taking the convex hull of a set does not increase the diameter, the normalization $\diam(X) = 1$ implies that $\diam(\Omega) = 1$. The points $Y \subseteq X$ that are far from the boundary $\partial \Omega$, meaning
$$Y = \left\{x_i: d(x_i, \partial \Omega) > \varepsilon \right\},$$
cannot have distance $\geq 1-\varepsilon$ with any other point in the set and they cannot contribute to the number of antipodes. However, they can potentially increase the number of neighbors.  Deleting all the points in $Y$ leads to a stronger inequality and we may assume $Y = \emptyset$. A convex set with diameter $=1$ has circumference $\lesssim 1$ and we may thus partition the set
$$ \Omega_{\varepsilon} = \left\{ x \in \Omega: d(x, \Omega^c) \leq \varepsilon \right\}$$
into $k$ boxes of size $\varepsilon/4 \times \varepsilon/4$ that we again denote by $B_1, \dots, B_k$. 
\begin{center}
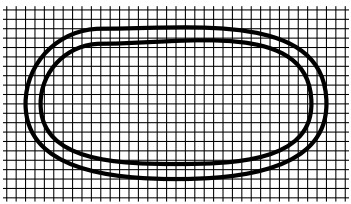
\begin{figure}[h!]
\begin{tikzpicture}
    \foreach \i in {-2,...,18} {
        \draw[] (-2.3, 0.125*\i) -- (2.3, 0.125*\i);
    }
        \foreach \i in {-18,...,18} {
        \draw[] (0.125*\i, -0.3) -- (0.125*\i, 2.3);
    }
    \draw [ultra thick] (0,0) to[out=0, in =270] (2, 1) to[out=90, in =0] (-1, 2) to[out=180, in =90] (-2, 1) to[out=270, in=180] (0,0);
        \draw [ultra thick] (0,0.2) to[out=0, in =270] (1.8, 1) to[out=90, in =0] (-1, 1.8) to[out=180, in =90] (-1.8, 1) to[out=270, in=180] (0,0.2);
\end{tikzpicture}    
\caption{Reducing to a $\varepsilon-$neighborhood of the boundary and then breaking it into $\varepsilon/4 \times \varepsilon/4$ boxes.}
\end{figure}
\end{center}

\begin{lemma}
    The number of boxes needed to cover $\Omega_{\varepsilon}$ is $k \sim 1/\varepsilon$.
\end{lemma}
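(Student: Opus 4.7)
The plan is to prove the two bounds $k \lesssim 1/\varepsilon$ and $k \gtrsim 1/\varepsilon$ separately, using two elementary perimeter estimates for a planar convex set $\Omega$ with $\diam(\Omega) \le 1$: namely $P(\Omega) \lesssim 1$ (for instance via Cauchy's formula $P(\Omega) = \int_0^\pi w(\theta)\,d\theta$, where the width satisfies $w(\theta) \le \diam(\Omega) \le 1$) and $P(\Omega) \ge 2$ (since $\Omega$ contains a diameter segment of length $1$, around which the boundary must wrap on both sides).

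For the upper bound I invoke the standard tubular-neighborhood estimate: the set $T_{2\varepsilon} = \{x \in \mathbb{R}^2 : d(x,\partial\Omega) \le 2\varepsilon\}$ has area $|T_{2\varepsilon}| \lesssim P(\Omega)\cdot\varepsilon \lesssim \varepsilon$. Working with a grid of axis-aligned $\varepsilon/4 \times \varepsilon/4$ boxes, any such box that meets $\Omega_\varepsilon$ has diameter $<\varepsilon$ and contains a point within $\varepsilon$ of $\partial\Omega$, hence is contained in $T_{2\varepsilon}$. Because the grid boxes are essentially disjoint,
\[ k \cdot \frac{\varepsilon^2}{16} \le |T_{2\varepsilon}| \lesssim \varepsilon, \]
which yields $k \lesssim 1/\varepsilon$.

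For the lower bound I show that any $\varepsilon/4 \times \varepsilon/4$ box $B$ intersects $\partial \Omega$ in total arclength at most $\varepsilon$. To see this, note that $\Omega \cap B$ is a convex subset of $B$, so by monotonicity of perimeter under convex inclusion $P(\Omega \cap B) \le P(B) = \varepsilon$; the set $\partial\Omega \cap B$ is one of the two pieces partitioning $\partial(\Omega \cap B)$ and therefore has length no more than $\varepsilon$. Since $P(\Omega) \ge 2$, covering $\partial\Omega \subset \Omega_\varepsilon$ requires $\gtrsim 1/\varepsilon$ boxes.

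The only step with any real subtlety is the arclength estimate for $\partial\Omega \cap B$ in the lower bound; once the two perimeter inequalities are in place, the remaining arguments are routine area- and length-counting.
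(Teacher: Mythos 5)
Your proof is correct. The upper bound is essentially the paper's argument: every box meeting $\Omega_\varepsilon$ sits inside an $O(\varepsilon)$-neighborhood of $\partial\Omega$, whose area is $\lesssim P(\Omega)\,\varepsilon\lesssim\varepsilon$, and dividing by the box area $\varepsilon^2/16$ gives $k\lesssim 1/\varepsilon$. Your lower bound takes a genuinely different route. The paper simply observes that $\Omega_\varepsilon$ is connected with diameter $1$, so (e.g.\ by projecting onto the line through a diametral pair) any cover by sets of diameter $<\varepsilon/2$ needs $\gtrsim 1/\varepsilon$ pieces. You instead lower-bound the perimeter, $P(\Omega)\ge 2$ by monotonicity of perimeter over the diameter segment, and cap the arclength of $\partial\Omega$ captured by a single box via $\mathcal{H}^1(\partial\Omega\cap B)\le P(\Omega\cap B)\le P(B)=\varepsilon$. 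Both are valid; the paper's connectedness argument is shorter and does not use convexity at all for this half, while your perimeter argument is a bit heavier but gives slightly more information (it bounds how much boundary any single box can absorb, which is in the same spirit as the convexity-based box-counting used later in \S 2.7). The only cosmetic quibble: $\partial\Omega\cap B$ and $\Omega\cap\partial B$ need not literally partition $\partial(\Omega\cap B)$ (they can overlap where $\partial\Omega$ crosses $\partial B$), but the containment $\partial\Omega\cap B\subseteq\partial(\Omega\cap B)$ is all you need and it holds.
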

\begin{proof}
    The lower bound is simple: the set $\Omega_{\varepsilon}$ is connected and has diameter 1. This requires at least $\sim 1/\varepsilon$ boxes of size $\varepsilon/4 \times \varepsilon/4$ to be covered. As for the upper bound, we suppose now that $\Omega_{\varepsilon}$ requires $k$ boxes of size $\varepsilon/4 \times \varepsilon/4$ to be covered. Then these boxes are all contained in a $3\varepsilon-$neighborhood of $\partial \Omega$ which is a set of area $\lesssim \varepsilon$ and the claim follows. 
\end{proof}

We note that this observation can be coupled with the one from above, where we showed that, with $k$ being the number of boxes,
$$ \frac{n^2}{k} \lesssim    \#\left\{ 1 \leq i,j \leq n: \|x_i - x_j\| \leq \varepsilon  \right\}.$$
Since the inequality improves when deleting points inside the convex domain, the boundary region can be
covered with $k \sim 1/\varepsilon$ boxes and since the number of antipodes is at most $\leq n^2$, this shows
$$ \# \left\{(i,j): \|x_i - x_j\| \leq \varepsilon\right\} \gtrsim \varepsilon \cdot \# \left\{(i,j): \|x_i - x_j\| \geq 1- \varepsilon\right\}.$$

\subsection{Linear Algebra.} The goal of this section is to rephrase the problem. Given boxes $B_1, \dots, B_k$, we introduce the matrix $M \in \mathbb{R}^{k \times k}$
$$ M = \left( \mathbf{1}_{\max_{x \in B_i, y \in B_j} \|x-y\| \geq 1-\varepsilon} \right)_{i,j=1}^{k}.$$
The matrix has entries $0$ and $1$ with $M_{ij} = 1$ meaning that it is possible for a point in $B_i$ and a point in $B_j$ to be antipodes. Introducing
$$ n_i = \# \left\{ 1 \leq j \leq n: x_j \in B_i \right\} $$
and the vector $\mathbf{n} = (n_1, n_2, \dots, n_k) \in \mathbb{N}^k$, one can bound the number of antipodes from above by
\begin{align*}
    \#\left\{ 1 \leq i,j \leq n: \|x_i - x_j\| \geq 1-\varepsilon \right\} \leq \sum_{i,j=1}^{k} n_i M_{ij} n_j =  \left\langle \mathbf{n}, M \mathbf{n}\right\rangle. 
\end{align*}    
Any two points in $B_i$ are going to be neighbors and thus
$$  \#\left\{ 1 \leq i,j \leq n: \|x_i - x_j\| \leq \varepsilon   \right\}  \geq \sum_{i=1}^{k} n_i^2 = \|\mathbf{n}\|^2.$$
Our desired result can be rephrased as
$$ \frac{\#\left\{ 1 \leq i,j \leq n: \|x_i - x_j\| \geq 1-\varepsilon \right\}}{  \#\left\{ 1 \leq i,j \leq n: \|x_i - x_j\| \leq \varepsilon  \right\}} \lesssim  \frac{\left( \log \varepsilon^{-1} \right)^{1/4}}{\varepsilon^{3/4}} $$
and the bounds above imply that
$$  \frac{\#\left\{ 1 \leq i,j \leq n: \|x_i - x_j\| \geq 1-\varepsilon \right\}}{  \#\left\{ 1 \leq i,j \leq n: \|x_i - x_j\| \leq \varepsilon  \right\}} \leq \frac{ \left\langle \mathbf{n}, M \mathbf{n}\right\rangle}{\|\mathbf{n}\|^2}.$$
 The remainder of the argument shows $\left\langle \mathbf{n}, M \mathbf{n}\right\rangle/\| \mathbf{n}\|^2 \lesssim  \left( \log \varepsilon^{-1} \right)^{1/4}\varepsilon^{-3/4}.$

\subsection{Linear Algebra} At this point, we use techniques from (elementary) linear algebra. $M$ is symmetric and the Courant-Fischer Theorem implies
$$ \frac{ \left\langle \mathbf{n}, M \mathbf{n}\right\rangle}{\|\mathbf{n}\|^2} \leq \sup_{\mathbf{x} \neq 0} \frac{ \left\langle \mathbf{x}, M \mathbf{x}\right\rangle}{\|\mathbf{x}\|^2} = \lambda_1(M),$$
where $\lambda_1(M)$ is the largest eigenvalue of $M$. Since $M$ is symmetric
$$ \lambda_1(M) =  \sqrt{ \lambda_1(M^T M)}.$$
The matrix $M^T M$ is symmetric and positive-definite
and all its eigenvalues are nonnegative. As a consequence
$$  \lambda_1(M^T M)   \leq \sum_{i=1}^{k} \lambda_i(M^T M) = \mbox{tr}(M^T M).$$
This is presumably where the argument stops being tight. The desired result follows from
$$ \mbox{tr}(M^T M) \lesssim  \frac{\left( \log \varepsilon^{-1} \right)^{1/2}}{\varepsilon^{3/2}}.$$
This inequality has a purely combinatorial interpretation since
\begin{align*}
     \mbox{tr}(M^T M) &= \sum_{i=1}^{k} (M^T M)_{ii} = \sum_{i=1}^{k} \sum_{\ell =1}^{k} (M^T)_{i \ell}M_{\ell i} = \sum_{i=1}^{k} \sum_{\ell =1}^{k}  M_{\ell i}^2 = \sum_{i=1}^{k} \sum_{\ell =1}^{k}  M_{\ell i} \\
     &= \# \left\{ (i,j) \in \left\{1,2,\dots, k\right\}^2:  \max_{x \in B_i, y \in B_j} \|x-y\| \geq 1-\varepsilon \right\}.
\end{align*}
 Since $k \sim 1/\varepsilon$, it remains to show that this sum is $\lesssim \sqrt{\log{k}} \cdot k^{3/2}$.

\subsection{Some Graph Theory}
We may think of the $k$ boxes as $k$ vertices in a graph. We then add edges connecting the vertices $i$ and $j$ whenever the condition $\max_{x \in B_i, y \in B_j} \|x-y\| \geq 1-\varepsilon$ is satisfied. It remains to give a bound on the  number of edges. For the sake of exposition, we state the combinatorial statement and prove it. \S 2.7 shows that it is applicable. Given a vertex $v \in V$, we use $N(v) = \left\{w \in V: (v,w) \in E\right\}$ to denote its neighborhood.

\begin{lemma}
    Let $G=(V,E)$ be a graph on $k$ vertices. Let $c>0$ be arbitrary. Assume that for each vertex $v \in V$ there exists a set of vertices $N_v \subset V$ with $|N_v| \leq c \sqrt{k}$ about which we know nothing. Then, for every $1 \leq s \leq k$, the number of vertices from $V \setminus N_v$ that have a lot of common neighbors with $v \in V$ is small
    $$ \left|\left\{ w \in V \setminus N_v: |N(v) \cap N(w)| \geq s \right\} \right| \leq c\frac{k}{s}.$$
    Then $|E|$ is bounded, with an implicit constant only depending only on $c$, by
    $$ |E|\lesssim  \sqrt{\log{k}} \cdot k^{3/2}.$$
\end{lemma}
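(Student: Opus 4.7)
The plan is to bound the edge count through a double count of paths of length two. Writing $d_v = |N(v)|$ and recalling that $\sum_{u \in V} d_u^2$ equals the number of ordered pairs $(v,w)$ (with $v=w$ allowed) for which there exists a common neighbor, weighted by the number of common neighbors, one has the identity
$$ \sum_{u \in V} d_u^2 \;=\; \sum_{v \in V} \sum_{w \in V} |N(v) \cap N(w)|. $$
So the strategy is to bound the inner sum for each fixed $v$ using the two pieces of information: the trivial bound on $N_v$ and the tail bound on $V \setminus N_v$.

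For a fixed $v$, split the inner sum according to whether $w$ lies in the mystery set $N_v$. On $N_v$, simply use $|N(v) \cap N(w)| \leq d_v$ to get a contribution at most $c \sqrt{k} \, d_v$. On $V \setminus N_v$, I would invoke the layer-cake representation
$$ \sum_{w \notin N_v} |N(v) \cap N(w)| \;=\; \sum_{s \geq 1} \bigl|\{ w \in V \setminus N_v : |N(v) \cap N(w)| \geq s \}\bigr|, $$
and apply the hypothesis term by term; since the hypothesis gives the bound $ck/s$ and the sum terminates at $s = d_v \leq k$, this contribution is at most a constant multiple of $k \log k$. Summing these two estimates over $v$ yields
$$ \sum_{u \in V} d_u^2 \;\lesssim\; \sqrt{k} \cdot |E| \;+\; k^2 \log k. $$

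Cauchy--Schwarz provides the complementary inequality $(2|E|)^2 = \bigl(\sum_v d_v\bigr)^2 \leq k \sum_v d_v^2$, so combining with the estimate above gives a quadratic inequality of the form
$$ \frac{|E|^2}{k} \;\lesssim\; \sqrt{k} \cdot |E| \;+\; k^2 \log k, $$
that is, $|E|^2 \lesssim k^{3/2} |E| + k^3 \log k$. Solving this quadratic in $|E|$ yields the desired $|E| \lesssim \sqrt{\log k} \cdot k^{3/2}$, which dominates the linear term $k^{3/2}$. The main subtlety I expect is the treatment of the $w \notin N_v$ contribution: the hypothesis only controls the distribution of $|N(v) \cap N(w)|$ for $s \geq 1$, and one must verify that truncating the layer-cake sum at $s \sim k$ produces only a logarithmic loss rather than a polynomial one. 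Everything else is bookkeeping via Cauchy--Schwarz and the quadratic formula.
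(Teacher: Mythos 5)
Your proposal is correct and is essentially identical to the paper's own proof: both arguments double-count paths of length two, split the inner sum over $N_v$ versus $V \setminus N_v$, apply the layer-cake/tail hypothesis to get the $k\log k$ contribution, and close with Cauchy--Schwarz and the resulting quadratic inequality in $|E|$. No further comment is needed.
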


\begin{proof}
We start with the Cauchy-Schwarz inequality and double counting,
 $$ \left( \sum_{v \in V} \deg(v) \right)^2 \leq k   \sum_{v \in V} \deg(v)^2 = k  \sum_{a,b \in V} |N(a) \cap N(b)|.$$
For any fixed $a \in V$, we have
\begin{align*}
    \sum_{b \in V} |N(a) \cap N(b)| &= \sum_{b \in N_a} |N(a) \cap N(b)| +  \sum_{b \in V \setminus N_a} |N(a) \cap N(b)|  \\
    &\leq \sum_{b \in N_a} | N(a)| + \sum_{b \in V \setminus N_a} |N(a) \cap N(b)|.
\end{align*}
The first sum is easy to bound, the size of the neighborhood is the degree $|N(a)| = \deg(a)$ which we sum over $|N_a| \leq c\sqrt{k}$ times for a total contribution of $\leq c\sqrt{k} \cdot \deg(a)$.
We bound the second sum using the assumption and  
\begin{align*}
      \sum_{b \in V \setminus N_a} |N(a) \cap N(b)| = \sum_{\ell=1}^{k}  \left|\left\{ w \in V \setminus N_a: |N(a) \cap N(w)| \geq \ell \right\} \right| \leq c\sum_{\ell=1}^{k} \frac{k}{\ell}
\end{align*}
giving a contribution of $\leq 2 c k \log{k}$. 
Summing over $a \in V$, we arrive at
\begin{align*}
  \left( \sum_{v \in V} \deg(v) \right)^2 &\leq k \sum_{a \in V} \left(c \sqrt{k} \cdot \deg(a) + c 2 k \log{k} \right) \\
  &\leq c k^{3/2}   \left( \sum_{v \in V} \deg(v) \right) + c 2 k^3 \log{k}.
\end{align*}
which implies $|E| \lesssim \sqrt{c \log{k}} \cdot k^{3/2} $.
\end{proof}

\subsection{Conclusion of the Argument} We now establish the following Lemma.
\begin{lemma} We have
$$ \sum_{i=1}^{k} \# \left\{ 1 \leq j \leq n:  \max_{x \in B_i, y \in B_j} d(x,y) \geq 1-\varepsilon  \right\} \lesssim \sqrt{\log{k}} \cdot k^{3/2} .$$
\end{lemma}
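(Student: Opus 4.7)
The plan is to interpret the sum as twice the number of edges in the graph $G$ on the vertex set $\{B_1, \dots, B_k\}$ in which $B_i \sim B_j$ exactly when $\max_{x \in B_i, y \in B_j} \|x-y\| \geq 1-\varepsilon$; there are no self-loops since each box has diameter $< \varepsilon/2 < 1-\varepsilon$. I then apply the Lemma of \S 2.6 to $G$ with an absolute constant $c$, which yields $|E(G)| \lesssim \sqrt{\log k} \cdot k^{3/2}$ and hence the claim.

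For each $B_v$ with center $c_v$, set $N_v := \{B_w : \|c_v - c_w\| \leq \sqrt{\varepsilon}\}$. By \S 2.3 every box lies within distance $\varepsilon$ of the convex curve $\partial\Omega$, and an integral-geometry estimate (or direct convexity argument) shows that $\partial\Omega$ has arclength $\lesssim \sqrt{\varepsilon}$ inside any disk of radius $\sqrt{\varepsilon}$; since each box has area $\sim \varepsilon^2$ and sits in a tubular strip of width $\varepsilon$, this yields $|N_v| \lesssim \varepsilon^{3/2}/\varepsilon^2 = \sqrt{k}$. The substantive step is the common-neighbor bound
\[
w \notin N_v, \ d := \|c_v - c_w\| \geq \sqrt{\varepsilon} \quad \Longrightarrow \quad |N(v) \cap N(w)| \lesssim \frac{1}{d}.
\]
Granting this, $\{w \notin N_v : |N(v) \cap N(w)| \geq s\} \subseteq \{w : \|c_w - c_v\| \leq C/s\}$ contains $\lesssim (C/s)/\varepsilon = Ck/s$ boxes by the same tubular count, verifying the hypothesis of the Lemma of \S 2.6.

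To prove the $1/d$ bound, observe that since each box has diameter $< \varepsilon/2$, any antipodal pair of boxes has center distance in $[1-2\varepsilon, 1]$. A common neighbor $B_\ell$ of $B_v$ and $B_w$ therefore has $c_\ell$ in the intersection of the two annuli
\[
\{x : \|x - c_v\| \in [1-2\varepsilon, 1]\} \cap \{x : \|x - c_w\| \in [1-2\varepsilon, 1]\}.
\]
Placing $c_v = (-d/2, 0)$ and $c_w = (d/2, 0)$, this intersection splits into two components near the apex points $P_\pm = (0, \pm \sqrt{1-d^2/4})$. Near each apex, the two annuli are locally strips of radial width $2\varepsilon$ whose bounding circles meet with tangent lines at angle $\alpha$ satisfying $\sin\alpha \sim d$ (using $d \leq 1$ so that $\sqrt{1-d^2/4}$ is bounded away from zero). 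Elementary plane geometry then shows that each component is contained in a parallelogram of area $\lesssim \varepsilon^2/d$; since the boxes $B_\ell$ are disjoint cells of area $\sim \varepsilon^2$, at most $\lesssim 1/d$ of them can have centers in a given component, giving the claim.

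The main obstacle is precisely this geometric estimate: the parallelogram-area computation is elementary, but care is needed to verify that all implicit constants are absolute, so that the constant $c$ in the Lemma of \S 2.6 does not depend on $\varepsilon$ or the particular shape of $\Omega$. The reduction from \S 2.3 is critical at two places: without it, a plain disk-area count near $c_v$ would yield only $|N_v| \lesssim k$ rather than the needed $\sqrt{k}$, and the same tubular-strip count is what converts $d \leq C/s$ into the required $Ck/s$ box bound; in either case the graph-theoretic input would collapse.
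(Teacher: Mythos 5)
Your proposal is correct and follows essentially the same route as the paper: reduce to counting edges of the antipodality graph on boxes, take $N_v$ to be the boxes within distance $\sim\sqrt{\varepsilon}$ (controlled by the convexity/tubular count giving $\lesssim \varepsilon^{-1}d$ boxes within distance $d$), bound common neighbors of two boxes at distance $d \geq \sqrt{\varepsilon}$ by $\lesssim 1/d$ via the intersection of two width-$\mathcal{O}(\varepsilon)$ annuli, and invoke the graph-theoretic lemma of \S 2.6. The only cosmetic difference is that you thicken the annuli and argue by area of the strip intersection at the apex, whereas the paper computes the four corner points of the annulus intersection explicitly and then perturbs the base points within their boxes; both yield the same $\varepsilon^2/d$ bound.
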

This is sharp up to the logarithmic factor. Using the regular polygon with $\sim 1/\sqrt{\varepsilon}$ sides, chosen to be an even number, we see that each line segment is made up of $\sim 1/\sqrt{\varepsilon}$ boxes. Each box is antipodal to all the boxes on the `antipodal' side, which creates $\sim 1/\sqrt{\varepsilon} \times 1/\varepsilon = \varepsilon^{-3/2} \sim k^{3/2}$ antipodal boxes.

\begin{proof}
We use the Lemma from \S 2.6.  Boxes are vertices. The convexity of $\Omega$ ensures that any box $B_i$ has the property that there are $\lesssim \varepsilon^{-1} d$ boxes within distance $\leq d$. We think of any box $B_j$ within distance $\lesssim \sqrt{\varepsilon}$ of $B_i$ as `forbidden' and make no statement about it. 
Suppose now that  
 $$  d = \min_{x \in B_i, y \in B_j} d(x,y) \geq 10 \sqrt{\varepsilon}.$$
We want to show that $B_i$ and $B_j$ have few common neighbors (depending on $d$).
Pick $a \in B_i$ and $b \in B_j$ chosen so as to minimize the distance. The problem is invariant under translation and rotation so we may assume that both $a$ and $b$ are on the $x-$axis and $a = -d/2$ while $b=d/2$. The points that are distance $\geq 1-\varepsilon$ from both $a$ and $b$ are contained in the intersection of two annuli
$$ \left\{x \in \mathbb{R}^2: 1 -\varepsilon \leq \|x-a\| \leq 1 \right\} \cap \left\{x \in \mathbb{R}^2: 1 -\varepsilon \leq \|x-b\| \leq 1 \right\}.$$

\begin{center}
    \begin{figure}[h!]
\begin{tikzpicture}
    \filldraw (0.95,0) circle (0.07cm);
    \node at (-0.78, -0.4) {$a$};
        \filldraw (-0.78,0) circle (0.07cm);
     \node at (0.95, -0.4) {$b$};
     \draw[thick] (-1,0.12) circle (0.3cm);  
     \draw [thick] (0.95, 0) to[out=270, in=180] (1.1, -0.2) to[out=0, in=270] (1.3, 0.1) to[out=90, in=0] (1, 0.2) to[out=180, in=90] (0.95,0);
     \node at (-1.5, 0.5) {$B_i$};
   \node at (1.5, 0) {$B_j$};
      \draw [thick, domain=45:100] plot ({-1+4*cos(\x)}, { 4*sin(\x)});
     \draw [thick, domain=45:100] plot ({-1+3.7*cos(\x)}, { + 3.7*sin(\x)});
     \draw [thick, domain=80:135] plot ({+1+3.7*cos(\x)}, { + 3.7*sin(\x)});
     \draw [thick, domain=80:135] plot ({+1+4*cos(\x)}, { + 4*sin(\x)});
     \draw [<->, thick] (-0.6, 0) -- (0.7, 0);
     \node at (0, -0.3) {$d$};
     \filldraw (0.58, 3.67) circle (0.05cm);
       \filldraw (-0.58, 3.67) circle (0.05cm);
         \filldraw (0, 3.56) circle (0.05cm);
       \filldraw (0, 3.88) circle (0.05cm);   
\end{tikzpicture}
\caption{A cartoon picture of the argument.}
    \end{figure}
\end{center}
\vspace{-10pt}

We refer to Fig. 5 for a sketch -- note that the intersection of two annuli could be comprised of two connected components: however, since $a$ and $b$ are distance at most 1, we see from the equilateral triangle that the distance
of these two connected components is at least $\geq 2 (\sqrt{3}/2) \gg 1$ and thus they cannot both be in the set. There are four intersection points and solving the four quadratic equations determines where these points are: the two points on the $y-$axis are
$$ \left(0, \frac{\sqrt{4 - d^2}}{2} \right) \quad \mbox{and} \quad \left(0, \frac{ \sqrt{4 -d^2+4 e^2-8 e}}{2} \right).$$
The difference between these two points is order $\sim \varepsilon$. The other two points are symmetric around the $y-$axis and given by
$$ \left( \pm \frac{2\varepsilon - \varepsilon^2}{2d}, \frac{\sqrt{-d^4+2 d^2 e^2-4 d^2 e+4 d^2-e^4+4 e^3-4 e^2}}{2 d} \right)$$
and the distance between these points is $\lesssim \varepsilon/d$. This spherical quadrilateral can be covered with $\lesssim 1/d$ boxes of size $\varepsilon/4 \times \varepsilon/4$. Moreover, if we now replace $a$ and $b$ by other points in $B_i$ and $B_j$, the sets cannot move more than distance $<\varepsilon/4$ trapping us in a $\varepsilon-$neighborhood of the quadrilateral which does not change the dimensions by more than a constant. Thus the set of all points that can be potentially antipodal to points from both $B_i$ and $B_j$ can be covered with $\lesssim 1/d$ boxes of size $\varepsilon/4 \times \varepsilon/4$.  This means that if a box $B_j$ were to have $\geq s$ antipodal neighbors with $B_i$, then we require $d \lesssim 1/s$ and because of convexity of the boundary there are $\lesssim \varepsilon^{-1}/s$ boxes in that neighborhood. Since $\varepsilon^{-1} \sim k$, the Lemma applies.

\end{proof}


\begin{thebibliography}{10}

\bibitem{baez} J. Baez, K. Bagdasaryan and P. Gibbs, The Lebesgue universal covering problem, \textit{Journal of Computational Geometry} \textbf{6} (2015), p. 288–-299.

\bibitem{toth1}  L. Fejes-T\'oth, On the sum of distances determined by a pointset, \textit{Acta Mathematica Academiae Scientiarum Hungarica} \textbf{7} (1956),  p. 397–401

\bibitem{toth} L. Fejes-T\'oth, \"Uber eine Punktverteilung auf der Kugel, \textit{Acta Mathematica Hungarica} \textbf{10}, p. 13-19.



\bibitem{hansen} H. C. Hansen, Small universal covers for sets of unit diameter, \textit{Geometriae Dedicata} \textbf{42} (1992),  p. 205–-213.

\bibitem{larcher} G. Larcher and F. Pillichshammer, The sum of distances between vertices of a convex polygon with unit perimeter. The American Mathematical Monthly, 115 (2008), 350-355.

\bibitem{fritz} F. Pillichshammer, On the sum of squared distances in the Euclidean plane, \textit{Arch. Math.} \textbf{74} (2000) 472 -- 480.

\bibitem{fritz2} F. Pillichshammer,  On extremal point distributions in the Euclidean plane. \textit{Acta Mathematica Hungarica} \textbf{98} (2003), 311-321.

\bibitem{fritz3} F. Pillichshammer, A note on the sum of distances under a diameter constraint, \textit{Arch. Math.} \textbf{77} (2001), 195–199

\bibitem{touss} G. Toussaint, Computational geometric aspects of rhythm, melody, and voice-leading. Computational Geometry, 43 (2010), 2-22.

\bibitem{wit} H. S. Witsenhausen, On the maximum of the sum of squared distances under a diameter constraint, \textit{Amer. Math. Monthly} \textbf{81}, 1100 -- 1101 (1974).

\bibitem{wolf} R. Wolf,  Averaging distances in real quasi-hypermetric Banach spaces of finite dimension. \textit{Israel J. Math.} \textbf{110}, 125 -- 152 (1999).
    
\end{thebibliography}
\end{document}